\newtheorem{theorem}{Theorem}
\newtheorem{lemma}{Lemma}
\theoremstyle{definition}
\newtheorem{remark}{Remark}
\newcommand{\Aut}{\operatorname{Aut}}
\begin{document}


\vspace{1cm}

\title[On primitive prime divisors]{On primitive prime divisors of the~orders of Suzuki--Ree groups (corrected version)}

\author{Maria A. Grechkoseeva}

\thanks{The research was carried out within the framework of the Sobolev Institute of Mathematics state contract (project FWNF-2022-0002).}

\begin{abstract}

There is a well-known factorization of the number $2^{2m}+1$, with $m$ odd, related to the orders of tori of simple Suzuki groups: $2^{2m}+1$ is a product of $a=2^m+2^{(m+1)/2}+1$ and $b=2^m-2^{(m+1)/2}+1$. By the Bang--Zsigmondy theorem, there is a primitive prime divisor of $2^{4m}-1$, that is, a prime $r$ that divides $2^{4m}-1$ and does not divide  $2^i-1$ for any $i<4m$. It is easy to see that $r$ divides $2^{2m}+1$, and so it divides one of the numbers $a$ and $b$. The main objective of this paper is to show that for every  $m>5$, each of $a$ and $b$ is divisible by some primitive prime divisor of $2^{4m}-1$. Also we prove similar results for primitive prime divisors related to the simple Ree groups. As an application, we find the independence and  2-independence numbers of the prime graphs of almost simple Suzuki--Ree groups.

{\bf Keywords:} primitive prime divisor, Suzuki--Ree groups, prime graph.
 \end{abstract}

\maketitle

\section*{Introduction}

This is a corrected version of \cite{23Gre.t}. The statement of Theorem 3 of \cite{23Gre.t} concerning independent numbers for almost simple groups with socle $^2G_2(3^m)$ is incorrect. Here we give a corrected version. The difference between \cite{23Gre.t} and the present paper is Item (d) in Theorem \ref{t:independence},  the part of the proof of Theorem \ref{t:independence} in the case $L={}^2G_2(3^m)$ and Remark~3.

A primitive prime divisor of $q^m-1$, where $q$ and $m$ are integers larger than~$1$, is a~prime that divides $q^m-1$ and does not divide $q^i-1$ for all $i<m$.
It is easy to see that a primitive prime divisor of $q^m-1$ always divides $\Phi_m(q)$, where $\Phi_m(x)$ is the $m$th cyclotomic polynomial. Bang \cite{86Bang} proved that primitive prime divisors exist provided that $q$ and $m$ are not too small. Also this fact follows from a later and more general result of Zsigmondy  \cite{Zs}.

\begin{theorem}[{Bang--Zsigmondy}]\label{t:bz} Let $q$ and $m$ be integers larger than $1$.
Then either there is a prime that divides $q^m-1$ and does not divided $q^i-1$ for all $i<m$, or one of the following holds:
\begin{enumerate}
 \item[(a)] $m=2$, $q=2^s-1$, where $s\geqslant 2$;
 \item[(b)] $m=6$, $q=2$.
\end{enumerate}
\end{theorem}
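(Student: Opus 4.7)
The plan is to reduce the problem to analysing the cyclotomic polynomial $\Phi_m(q)$ via the factorization
\[
q^m - 1 = \prod_{d \mid m} \Phi_d(q).
\]
A prime $r$ is a primitive prime divisor of $q^m-1$ if and only if $r \mid \Phi_m(q)$ and the multiplicative order of $q$ modulo $r$ is exactly $m$. So it suffices to exhibit a prime divisor of $\Phi_m(q)$ whose order is $m$, for every $(m,q)$ outside (a) and (b).

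First I would establish the structural lemma of Birkhoff--Vandiver type: if $r$ is a prime dividing $\Phi_m(q)$ that is \emph{not} a primitive prime divisor, and $d < m$ denotes the order of $q$ modulo $r$, then $d \mid m$, and applying the lifting-the-exponent lemma to $(q^d)^{m/d}-1$ forces $m/d$ to be a power of $r$; in particular $r \mid m$. A finer valuation count yields $v_r(\Phi_m(q)) = 1$ for odd $r$, with a parallel (slightly weaker) bound on $v_2(\Phi_m(q))$. Combining these estimates shows that if $\Phi_m(q)$ has \emph{no} primitive prime divisor at all, then $\Phi_m(q)$ divides (essentially) $m$, and in particular $\Phi_m(q) \leq m$.

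Next I would lower-bound $\Phi_m(q)$ using the product formula
\[
\Phi_m(q) = \prod_{\zeta}(q-\zeta),
\]
where $\zeta$ runs over primitive $m$-th roots of unity. Elementary estimates on $|q-\zeta|$ give $\Phi_m(q) \geq (q-1)^{\varphi(m)}$, with strict inequality whenever $m \geq 3$. A straightforward comparison of $(q-1)^{\varphi(m)}$ with $m$ shows $\Phi_m(q) > m$ for all pairs $(m,q)$ with $m \geq 3$, $q \geq 2$, except $(m,q)=(6,2)$, which contradicts the upper bound from the structural lemma and therefore forces a primitive prime divisor to exist.

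Finally I would check the borderline cases by hand. For $m=2$ one has $\Phi_2(q)=q+1$; the structural lemma forces every prime dividing $q+1$ that is not a primitive prime divisor to equal $2$, so the absence of primitive prime divisors is equivalent to $q+1$ being a power of $2$, i.e.\ $q=2^s-1$ as in (a). The sporadic pair $(m,q)=(6,2)$ is verified directly: $\Phi_6(2)=3$, and $3\mid 2^2-1$, so $3$ is non-primitive, giving (b). The step I expect to be the main obstacle is the $r$-adic bookkeeping in the structural lemma, especially the case $r=2$, where the lifting-the-exponent lemma takes a modified form and the valuation $v_2(\Phi_m(q))$ needs a separate analysis.
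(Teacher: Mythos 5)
The paper itself does not prove Theorem~\ref{t:bz}; it is quoted from Bang and Zsigmondy with citations, so there is no internal proof to compare against and your argument must stand on its own. Its first half does: the structural lemma you sketch is correct and standard. If $r\mid\Phi_m(q)$ is non-primitive and $d$ is the order of $q$ modulo $r$, then indeed $m=dr^k$ with $k\geqslant 1$, so $r\mid m$ and in fact $r$ is the \emph{largest} prime divisor of $m$ (since $d\mid r-1$), and $v_r(\Phi_m(q))=1$ for odd $r$, the $r=2$ anomaly being confined to $m=2$. This correctly reduces the theorem to proving $\Phi_m(q)>r$ for $m\geqslant 3$, where $r$ is the largest prime divisor of $m$; your treatment of $m=2$ giving exception (a), and the direct check of $(m,q)=(6,2)$, are also fine.

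The gap is in the lower-bound step, and it is not a technicality. You claim that $\Phi_m(q)\geqslant (q-1)^{\varphi(m)}$ plus ``a straightforward comparison of $(q-1)^{\varphi(m)}$ with $m$'' gives $\Phi_m(q)>m$ for all $(m,q)\neq(6,2)$ with $m\geqslant 3$. For $q=2$ this bound reads $\Phi_m(2)\geqslant 1$ (strictness improves it to $\geqslant 2$), which is vacuous: no comparison of $(q-1)^{\varphi(m)}=1$ with $m$ can ever yield $\Phi_m(2)>m$. The same failure occurs at $(m,q)=(6,3)$, where $(q-1)^{\varphi(m)}=4<6$ even though $\Phi_6(3)=7$ is primitive. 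And $q=2$ cannot be waved away: it is precisely Bang's theorem, the hardest part of the statement, the source of exception (b), and the case this paper actually needs, since the Suzuki groups are defined over fields of order $2^m$. To close the gap you need a genuinely sharper estimate, for instance the M\"obius form
\[
\Phi_m(q)=\prod_{e\mid \operatorname{rad}(m)}\bigl(q^{m/e}-1\bigr)^{\mu(e)},
\qquad\text{whence}\qquad
\Phi_m(q)\geqslant q^{\varphi(m)}\prod_{j\geqslant 1}\bigl(1-q^{-j}\bigr),
\]
giving $\Phi_m(2)>2^{\varphi(m)-2}$; one must then compare $2^{\varphi(m)-2}$ with the largest prime divisor of $m$ and settle by hand the finitely many small $m$ where that comparison fails --- and it is exactly in this residual case analysis that the single exception $m=6$, $q=2$ emerges. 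As written, your proof establishes the theorem only for $q\geqslant 4$ (and most of $q=3$), which misses the substance of the result.
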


Primitive prime divisors play a prominent role in the theory of finite groups of Lie type,  since
for a finite group of Lie type over a field of size $q$, the properties of its elements of prime order $r$ largely depend not on $r$ itself but on the smallest integer $m$ such that $r$ divides $q^m-1$. In the present paper we are concerned with prime divisors of the orders of the Suzuki--Ree groups, that is, of the groups $^2B_2(2^m)$, $^2G_2(3^m)$ and $^2F_4(2^m)$, where $m$ is an odd prime.

Let $L$ be one of the groups $^2B_2(2^m)$, $^2G_2(3^m)$, or $^2F_4(2^m)$, and we define respectively  $v=2$, $3$, $2$, and $n=4$, $6$, $12$. Then the order of $L$ is divisible by $\Phi_n(v^m)$ but $L$ does not include a torus of order $\Phi_n(v^m)$. Instead, $L$ includes tori of orders $\Psi_n(\sqrt{v^m})$ and $\Psi_n(-\sqrt{v^m})$,
where
$$\Psi_4(x)=x^2+\sqrt{2}x+1;$$
\begin{equation}\label{e:psi}\Psi_6(x)=x^2+\sqrt{3}x+1;\end{equation}
$$\Psi_{12}(x)=x^4+\sqrt{2}x^3+x^2+\sqrt{2}x+1.$$
Note that $\Psi_n(\sqrt{v^m})\Psi_n(-\sqrt{v^m})=\Phi_n(v^m)$ and the numbers $\Psi_n(\sqrt{v^m})$ and $\Psi_n(-\sqrt{v^m})$ are coprime.

Let $m>1$. By the Bang--Zsigmondy theorem, there is a primitive prime divisor $r$ of $v^{mn}-1$. It is clear that $r$ is also a primitive prime divisor of $(v^m)^n-1$, and so $r$ divides $\Phi_n(v^m)$. Thus at least one of the numbers $\Psi_n(\sqrt{v^m})$ and $\Psi_n(-\sqrt{v^m})$ is divisible by a~primitive prime divisor of $v^{mn}-1$. Our main goal is to show that, with two exceptions, each of the numbers $\Psi_n(\sqrt{v^m})$ and $\Psi_n(-\sqrt{v^m})$ is divisible by some primitive prime divisor of $v^{mn}-1$; in particular, $v^{mn}-1$ has at least two primitive prime divisors.

\begin{theorem}\label{t:primitive} Let $v=2,3,2$ and $n=4,6,12$, respectively, let
$\Psi_n(x)$ be the polynomial defined in \eqref{e:psi}, and suppose that $m$ is an odd integer larger than $1$. Then for every $\varepsilon\in\{+,-\}$, either there is a prime that divides $\Psi_{n}(\varepsilon\sqrt{v^{m}})$ and does not divide $v^{i}-1$ for any $i<nm$, or
$v=2$, $n=4$, and $m=3, 5$.
\end{theorem}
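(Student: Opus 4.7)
I would argue by contradiction: assume $N := \Psi_n(\varepsilon\sqrt{v^m})$ has no primitive prime divisor of $v^{nm}-1$ and work to force $(v,n,m)$ into the listed exceptional set. The starting point is the two facts already recorded in the paper: $\Psi_n(\sqrt{v^m})\cdot\Psi_n(-\sqrt{v^m}) = \Phi_n(v^m)$ and $\gcd(\Psi_n(\sqrt{v^m}),\Psi_n(-\sqrt{v^m})) = 1$. By Theorem~\ref{t:bz} a primitive prime divisor of $v^{nm}-1$ exists (since $nm \geq 12$, well away from the two Bang--Zsigmondy exceptions); it divides $\Phi_n(v^m)$, and by the coprimality falls into exactly one of the two factors. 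The task is then to show that the other factor contains a primitive divisor as well.

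Next I would invoke the standard sharpening of Bang--Zsigmondy: any prime $r$ dividing $\Phi_n(v^m)$ either divides $nm$, or satisfies $\operatorname{ord}_r(v) = nk$ for some $k \mid m$. Thus, if $N$ has no primitive prime divisor, every prime of $N$ either divides $nm$ or has order $nk$ with $k < m$. Combined with the cyclotomic factorization $\Phi_n(v^m) = \prod_{d \mid m}\Phi_{nd}(v)$ (valid whenever $\gcd(n,m) = 1$; the remaining case $3 \mid m$ with $n \in \{6,12\}$ is handled by the companion identity $\Phi_n(v^{3s}) = \Phi_{3n}(v^s)$ available when $3 \mid n$), this shows that
\[
N \ \Bigm|\ K(m) \prod_{\substack{d \mid m \\ d < m}}\Phi_{nd}(v),
\]
where $K(m)$ is a bounded factor supported on primes dividing $nm$ and controlled by the lifting-the-exponent lemma.

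The heart of the proof is then a size comparison. From the explicit formula for $\Psi_n$ one gets $N \geq \tfrac{1}{2}v^{m\phi(n)/2}$ for $m \geq 3$. On the other side, using $\Phi_k(v) = v^{\phi(k)}(1 + O(v^{-1}))$ together with the arithmetic identity $\sum_{d \mid m,\,d<m}\phi(nd) = \phi(n)(m-\phi(m))$ (valid for $\gcd(n,m) = 1$), the non-primitive cofactor satisfies an upper bound of order $v^{\phi(n)(m-\phi(m))}$. Matching the two bounds forces $\phi(m) > m/2$, which holds for most odd $m$ but can fail narrowly when $m$ has several small prime factors (e.g.\ $m = 3\cdot 5\cdot 7$). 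I expect this to be the hard part of the argument: closing the gap in those residual $m$ by sharpening the upper estimate, say via $\prod_{d\mid m,\,d<m}\Phi_{nd}(v) = \Phi_n(v^m)/\Phi_{nm}(v)$ together with a sharp lower bound for $\Phi_{nm}(v)$, reducing the remaining possibilities to a short finite list of $m$, and checking these directly. The direct check should leave $(v,n,m) = (2,4,3)$ and $(2,4,5)$, where $\Psi_4(-\sqrt{2^m})$ equals $5$ and $25$ respectively, as the only genuine failures, matching the listed exceptions.
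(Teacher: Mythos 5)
Your strategy --- playing a lower bound for $N=\Psi_n(\varepsilon\sqrt{v^m})$ against an upper bound for the non-primitive part of $\Phi_n(v^m)$ --- is genuinely different from the paper's, but it has a fatal gap at exactly the point you flag as ``the hard part,'' and that gap cannot be closed by sharpening estimates. Your comparison is $N\approx v^{m\varphi(n)/2}$ versus a non-primitive cofactor $K(m)\prod_{d\mid m,\,d<m}\Phi_{nd}(v)\approx v^{\varphi(n)(m-\varphi(m))}$ (up to the bounded intrinsic factor), so a contradiction arises only when $\varphi(m)>m/2$, up to a negligible error. But the set of odd $m$ with $\varphi(m)\leqslant m/2$ is infinite, not ``a short finite list'': it contains every odd multiple of $105=3\cdot 5\cdot 7$ (where $\varphi(m)/m\leqslant 48/105<1/2$), every odd multiple of $165$, of $195$, and so on. For such $m$ the inequality genuinely points the wrong way: for $v=2$, $n=4$, $m=105$ one has $N\approx 2^{105}$, while the non-primitive cofactor is approximately $2^{\varphi(4)(105-\varphi(105))}=2^{114}$. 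So there is no numerical obstruction at all to $N$ being built entirely from non-primitive primes, and no refinement of magnitude bounds --- including your proposed use of $\Phi_n(v^m)/\Phi_{nm}(v)$, which is exactly the bound computed above --- can yield a contradiction. The size-comparison framework discards the one piece of information that matters: \emph{which} primes of $\Phi_{nm}(v)$ can land in which of the two coprime factors $\Psi_n(\pm\sqrt{v^m})$, and for infinitely many $m$ that information is indispensable.

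This is precisely what the paper's Lemma~\ref{l:gcd} supplies: an Aurifeuillian-type factorization of $\Phi_{nm}$ itself, not merely of $\Phi_n(x^{2m})$. Setting $f_m(x)=\Phi_m(x/\xi)\Phi_m(x/\overline\xi)$ for a suitable primitive $2n$th root of unity $\xi$, one gets $\Phi_{nm}(x^2)=f_m(x)f_m(-x)$ with $f_m(\sqrt{v})\in\mathbb{Z}$, and the gcd is then computed \emph{exactly}: $(\Phi_{nm}(v),\Psi_n(\varepsilon\sqrt{v^m}))=|f_m(\sqrt{v})|$, a number of size roughly $v^{\varphi(m)}$ (the cases $3\mid m$ for $n=6,12$ reduce to this via $\Phi_{6m}(v)=\Phi_{6d}(v^{3^i})$). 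Combined with the formula $k_{nm}(v)=\Phi_{nm}(v)/(\Phi_{nm}(v),r)$, where $r$ is the largest prime divisor of $m$, the whole theorem reduces to the elementary estimate $|f_m(\sqrt{v})|>r$, which is the paper's Lemma~\ref{l:bounds} and holds outside the genuine exceptions $(v,n,m)=(2,4,3),(2,4,5)$. Those exceptional values, and the identification $\Psi_4(-\sqrt{2^3})=5$, $\Psi_4(-\sqrt{2^5})=25$, you did get right, and your opening reduction (coprimality of the two $\Psi$-factors, the role of intrinsic primes) is sound; but to produce a complete proof you would have to replace the size comparison by an exact localization of the primitive part along these lines.
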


Observe that the cases when $v=2$, $n=4$ and $m=3,5$ are indeed exceptional:  $13$ and $41$ are
the only primitive prime divisors of $2^{12}-1$ and $2^{20}-1$ respectively.

If we digress from the Suzuki--Ree groups, we can ask whether there are factorizations of the form $\Phi_n(v^m)=\Psi_n(\sqrt{v^m})\Psi_n(-\sqrt{v^m})$ for other values of $v$ and $n$ and whether the corresponding analog of Theorem \ref{t:primitive} holds. Some comments on this subject are given in Remark 1 at the end of \S\,1. Here we just mention that such factorizations are referred to as Aurifeuillian factorizations (see, for example, \cite[p. 309]{94Rie}) and they can be derived from Lucas's formulas for cyclotomic polynomials  and from other, more general, formulas provided in \cite[Theorem 1]{62Sch}.

In the second part of the paper, we use Theorem \ref{t:primitive} to find some parameters of the prime graph of an almost simple group whose socle is a Suzuki or Ree group. The prime graph (or Gruneberg--Kegel graph) of a finite group $G$ is the graph $GK(G)$ whose vertex set is the set of prime divisors of the order of $G$ and in which two primes $r$ and $s$ are adjacent if and only if $r\neq s$ and $G$ contains an element of order $rs$. A set of vertices that are pairwise not adjacent is said to be independent. We write $t(G)$ to denote the largest size of an independent set of vertices in the graph $GK(G)$ and $t(2,G)$ to denote the largest size of an independent set containing~$2$. Imposing conditions on these numbers, one can substantially restrict the structure of $G$: for example,  \cite[Theorem 1]{57Hig} implies that $G$ is nonsolvable whenever $t(G)\geqslant 3$, and by \cite{05Vas.t}, we have that a group $G$ with $t(2,G)\geqslant 2$ has at most one nonabelian composition factor. Both of these results are of great importance to the problems of recognition of finite groups  by the set of element orders and by the prime graph (see the surveys \cite{22Survey} and \cite{22CamMas} respectively).

The numbers $t(L)$ and $t(2,L)$ for all finite simple groups $L$ are found in \cite{05VasVd.t} (see also \cite{11VasVd.t} for corrections). The next natural step is to find these numbers for all almost simple groups, that is, for all $G$ such that $L\leq G\leq \Aut L$, where $L$ is a nonabelian simple group. The groups $^2B_2(2^m)$, $^2G_2(3^m)$, and $^2F_4(2^m)$ are simple if and only if  $m>1$.

\begin{theorem}\label{t:independence}
Let $L$ be a finite simple Suzuki or Ree group and $L<G\leq\Aut L$. Then one of the following holds:
\begin{enumerate}
\item[(a)] $t(G)=t(L)$ and $t(2,G)=t(2,L)$;
\item[(b)] $L={}^2B_2(32)$, $t(G)=t(L)-1$ and $t(2,G)=t(2,L)-1$;
\item[(c)] $L={}^2F_4(2^m)$, $\pi(G)\not\subseteq\pi(L)$, $t(G)=t(L)+1$, and $t(2,G)=t(2,L)$.
\item[(d)] $L={}^2G_2(3^m)$, $\pi(G)=\pi(L)$, $3$ divides $|G:L|$, $t(G)=t(L)-1$ and $t(2,G)=t(2,L)$.
\end{enumerate}
\end{theorem}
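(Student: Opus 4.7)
The plan is to handle each of the three families separately, tracking how the prime graph changes when passing from $L$ to an almost simple extension $G$. In each case $\Aut(L)/L$ is cyclic of order $m$ generated by the standard field automorphism $\sigma$, so $G = L\langle \phi \rangle$ with $\phi = \sigma^{m/k}$ of some order $k$ dividing $m$. An element of $G \setminus L$ of order $rp$, for $r \in \pi(L)$ and $p$ a prime dividing $k$, exists if and only if $r$ divides $|C_L(\phi_p)|$ for some $\phi_p \in G \setminus L$ of order $p$; here $C_L(\phi_p)$ is the fixed-point subgroup, itself a Suzuki or Ree group of the same type over a subfield of order $v^{m/p}$.

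The first step is to recall, using \cite{05VasVd.t} and \cite{11VasVd.t}, explicit extremal independent sets $\rho(L)$ and $\rho(2,L)$ realizing $t(L)$ and $t(2,L)$ in $GK(L)$. These consist of the characteristic prime $v$ (or $2$ in the $\rho(2,L)$ case) together with one primitive prime divisor drawn from each of the four cyclotomic factors of $|L|$; the two Aurifeuillian factors $\Psi_n(\pm\sqrt{v^m})$ contribute their primitive primes thanks to Theorem~\ref{t:primitive}, except at the Suzuki exceptions $m=3,5$.

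The second step is to show that, outside the listed exceptions, these sets remain extremal in $GK(G)$. Fix a prime $p$ dividing $m$ and a primitive prime divisor $r \in \rho(L)$ of $v^{nm}-1$. Every torus order of $C_L(\phi_p)$ divides $v^{nm/p}-1$, and since $r$ is primitive of index $nm > nm/p$, we have $r \nmid |C_L(\phi_p)|$; no new edge $\{r,p\}$ appears in $GK(G)$. Routine checks handle the small-prime entries, giving case~(a). For $L = {}^2F_4(2^m)$ with $\pi(G) \supsetneq \pi(L)$, any new prime $p \in \pi(G)\setminus\pi(L)$ divides $m$ and, by the same argument, is non-adjacent to every primitive prime of $\rho(L)$ in $GK(G)$; thus $\rho(L) \cup \{p\}$ is independent of size $t(L)+1$, giving case~(c), while $t(2,G) = t(2,L)$ follows similarly.

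The third step is to treat the two exceptional reductions. For case~(b), $L={}^2B_2(32)$: the fixed-point subgroup $C_L(\sigma) \cong {}^2B_2(2)$ has order $20$ and contains an involution, producing an order-$10$ element in $G$ and hence the edge $\{2,5\}$ in $GK(G)$. Since $\rho(L) = \rho(2,L) = \{2,5,31,41\}$ uses both $2$ and $5$, the loss is exactly one in each of $t(G)$ and $t(2,G)$. For case~(d), $L = {}^2G_2(3^m)$ with $3 \mid m$: the centralizer $C_L(\phi_3) \cong {}^2G_2(3^{m/3})$ contains primes of its cyclotomic factors $3^{m/3}+1$ and $3^{m/3}\pm\sqrt{3^{(m/3)+1}}+1$, all of which divide $3^m+1$ through the factorization $3^m+1 = (3^{m/3}+1)(3^{2m/3}-3^{m/3}+1)$, and hence lie in $\pi(q+1) \subseteq \pi(L)$. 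For such a prime $r$ the centralizer $C_L(g_r)$ in $L$ is the cyclic torus of order $q+1$ and contains no $3$-element, so $\{3,r\}$ is a non-edge in $GK(L)$; however $\phi_3 g_r \in G$ has order $3r$, producing the edge $\{3,r\}$ in $GK(G)$. Consequently any representative of the $q+1$-torus in an extremal $\rho(L)$ becomes adjacent to $3$, so one of $3$ and that representative must be dropped, giving $t(G) = t(L)-1$. The value $t(2,G)=t(2,L)$ is preserved because $\{2,3\}$ is an edge already in $GK(L)$ via $C_L(i) = \langle i \rangle \times L_2(3^m)$, so $3 \notin \rho(2,L)$.

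The hardest part is case~(d): verifying that every extremal independent set of $GK(L)$ must use either $3$ or a prime of $q+1$ (so the new edge indeed strikes), and that no size-$t(L)$ independent set survives in $GK(G)$. The argument requires a careful enumeration of cyclotomic-torus representatives, combined with Theorem~\ref{t:primitive} applied at scale $m$ to supply the large Aurifeuillian primes that replace the lost vertex, and at scale $m/3$ to control which primes fall inside $\pi(C_L(\phi_3))$.
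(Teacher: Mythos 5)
Your overall strategy coincides with the paper's: use the cyclic structure of $\Aut L/L$, the classification of extremal cocliques of $GK(L)$ from \cite{05VasVd.t,11VasVd.t}, Theorem~\ref{t:primitive} to place primitive prime divisors in both factors $\Psi_n(\pm\sqrt{v^m})$, and centralizers of field automorphisms to detect new edges. But case (d) --- which you yourself flag as the hardest part and leave unfinished --- contains a genuine error, and it is exactly the point this corrected paper exists to fix. You state case (d) as ``$L={}^2G_2(3^m)$ with $3\mid m$'' and conclude $t(G)=t(L)-1$ as soon as the edges $\{3,r\}$, $r\in\pi(3^m+1)$, appear. This is false without the hypothesis $\pi(G)=\pi(L)$, which you never invoke. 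If some prime $r\in\pi(G)\setminus\pi(L)$ exists (it divides $m$ and is larger than $3$), then $r$ is non-adjacent in $GK(G)$ to each primitive divisor $r_i$ of $3^{mi}-1$, $i=1,2,6$: an element of order $rr_i$ would force $r_i\in\pi({}^2G_2(3^{m/r}))$, whence $r_i$ divides $3^{(mi,\,6m/r)}-1=3^{(i,6)m/r}-1$ with $(i,6)m/r<im$, contradicting primitivity. So $\{r,r_1,r_2,r_6^+,r_6^-\}$ is an independent set of size $5$ and $t(G)=t(L)$, i.e., case (a) holds even though $3$ divides $|G:L|$; a concrete instance is $m=15$, $G=\Aut L$, where $5\notin\pi(L)$. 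Moreover, even assuming $\pi(G)=\pi(L)$, your argument only shows that the known extremal cocliques are destroyed (a lower bound of $t(L)-1$); the matching upper bound needs the observation that any size-$5$ coclique of $GK(G)$ has all its vertices in $\pi(L)$, hence is a coclique of the subgraph $GK(L)$, hence is of the form $\{3,r_1,r_2,r_6^+,r_6^-\}$ and is killed by the new edge between $3$ and $r_2$. That one line replaces the ``careful enumeration'' you defer, but it is precisely where $\pi(G)=\pi(L)$ enters.

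There are two smaller gaps. First, your uniform description of the extremal sets (``the characteristic prime together with one primitive prime from each cyclotomic factor'') is wrong for ${}^2F_4(2^m)$: there the size-$5$ cocliques are $\{r_2,r_4,r_6,r_{12}^+,r_{12}^-\}$ and do \emph{not} contain $2$; this asymmetry is the very reason case (c) can occur for ${}^2F_4$ but not for ${}^2B_2$ or ${}^2G_2$, whose extremal cocliques all contain the defining characteristic. Related to this, you never treat ${}^2B_2(2^m)$ with $\pi(G)\not\subseteq\pi(L)$ (e.g.\ $3\mid m$): one must show $t(G)=4$ rather than $5$, and the paper does this by noting that a size-$5$ independent set in $GK(G)$ would meet $\pi(L)$ in a size-$4$ coclique of $GK(L)$, which necessarily contains $2$, forcing $t(2,G)\geqslant 5>t(2,L)$ and contradicting $t(2,G)\leqslant t(2,L)$. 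Second, ${}^2F_4(8)$ needs separate treatment: $2^3+1=9$ has no prime divisor other than $3$, so the generic coclique description fails there, and in fact $t(L)=t(2,L)=4$ with independent set $\{2,19,37,109\}$; your steps 1--2 simply do not apply to it.
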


We use the following standard notation. By $\pi(a)$ and $\pi(G)$ we denote the set of prime divisors of an integer $a$ and of the order of a finite group $G$ respectively. We write  $(a,b)$ for the greatest common divisor of integers $a$ and $b$. By $\varphi(x)$ we denote the Euler totient function. If $x$ is a complex number, then $\overline x$ denotes the complex conjugate of $x$.

\section*{\bf \S\,1. Primitive divisors}

Let $m\geqslant 3$, and let $k_m(q)$ be the largest divisor of $q^m-1$ that is coprime to $q^i-1$ for all $i<m$. By \cite[Theorem 3]{Zs} (see also \cite[Proposition 2]{97Roi}),
if  $r$ is the largest prime divisor of $m$ then \begin{equation}\label{e:km}k_m(q)=\frac{\Phi_m(q)}{(\Phi_m(q),r)}.\end{equation}
Moreover, if $r$ divides $\Phi_m(q)$, then $(r,q)=1$ and $r-1$ is divisible by $l$, where $l$ is the $r'$-part of $m$, that is, $m=r^il$ and $(l,r)=1$.

We proceed now to prove Theorem \ref{t:primitive}. We will use, without explicit references, basic properties of cyclotomic polynomials, which can be found, for example, in   \cite[3.3.1--3.3.4]{04Pras}.

It is clear that a prime required in the statement of the theorem exists if and only if \begin{equation}\label{e:in}(k_{nm}(v),\Psi_n(\varepsilon\sqrt{v^m}))>1.\end{equation}
By \eqref{e:km}, the inequality~\eqref{e:in} is a consequence of the inequality
\begin{equation}\label{e:r}(\Phi_{nm}(v), \Psi_n(\varepsilon\sqrt{v^m}))>r,\end{equation}
where $r$ is the largest prime divisor of $nm$, and so of $m$.

\begin{lemma}\label{l:gcd}
Let $(m,n)=1$ and let $s$ be an odd positive integer.
\begin{enumerate}
\item[(a)] If $n=4,6$, then $(\Phi_{nm}(v^s), \Psi_n(\varepsilon\sqrt{v^{sm}}))=|\Phi_m(\sqrt{v^s}/\xi)\Phi_m(\sqrt{v^s}/\overline \xi)|,$ where $\xi$ is some suitable primitive
$2n$th root of unity.
\item[(b)] If $n=12$, then $(\Phi_{nm}(v^s),\Psi_{n}(\varepsilon \sqrt{v^{sm}}))=|\Phi_{3m}(\sqrt{v^s}/\xi)\Phi_{3m}(\sqrt{v^s}/\overline\xi)|,$ where $\xi$ is some suitable primitive $8$th root of unity.
 \end{enumerate}
\end{lemma}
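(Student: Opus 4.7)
Set $q = v^s$ and $z = \sqrt{q}$; since $s$ is odd, $z \in \mathbb{Z}[\sqrt{v}]$, and since $(m,n) = 1$ forces $m$ to be odd, $z^m = \sqrt{v^{sm}}$. The plan is to factor both $\Phi_{nm}(v^s)$ and $\Psi_n(\varepsilon\sqrt{v^{sm}})$ over a suitable cyclotomic extension as products of evaluations of the form $\Phi_k(z/\xi)$ with $\xi$ a primitive $8$-th root of unity (when $v = 2$) or a primitive $12$-th root of unity (when $v = 3$). In either case $z/\xi$ lies in a subring ($\mathbb{Z}[i]$ or $\mathbb{Z}[\zeta_3]$, respectively) fixed by the Galois action swapping $\sqrt{v}\leftrightarrow-\sqrt{v}$, so the products $\Phi_k(z/\xi)\Phi_k(z/\overline{\xi}) = |\Phi_k(z/\xi)|^2$ are automatically nonnegative integers for every~$k$.

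\textbf{Step 1 (factoring $\Phi_{nm}(q)$).} For $n \in \{4, 6\}$, I would decompose a primitive $nm$-th root of unity as $\eta\omega$ with $\eta$ primitive $n$-th and $\omega$ primitive $m$-th, split $q - \eta\omega = (z - \mu)(z + \mu)$ for $\mu$ a suitable primitive $2nm$-th root of unity, and regroup the resulting linear factors in $z$ by the primitive $2n$-th-root component of $\mu$, obtaining
\[
\Phi_{nm}(q) = \prod_{\xi\ \text{primitive}\ 2n\text{-th}} \Phi_m(z/\xi).
\]
For $n = 12$, the analogous argument, using the decomposition of a primitive $12m$-th root as (primitive $4$-th)$\times$(primitive $3m$-th) and grouping by the primitive $8$-th-root component of $\mu$, gives
\[
\Phi_{12m}(q) = \prod_{\xi\ \text{primitive}\ 8\text{-th}} \Phi_{3m}(z/\xi).
\]
In both cases, pairing $\xi$ with $\overline{\xi}$ realizes $\Phi_{nm}(q)$ as a product of two positive integer factors $|\Phi_k(z/\xi_0)|^2 \cdot |\Phi_k(z/(-\xi_0))|^2$, with $k = m$ for $n \in \{4, 6\}$ and $k = 3m$ for $n = 12$.

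\textbf{Step 2 (factoring $\Psi_n(\varepsilon z^m)$ and matching).} The roots of $\Psi_n(\varepsilon\cdot)$ are primitive $2n$-th roots of unity $\beta$, and since $(m, 2n) = 1$, each such $\beta$ has a unique primitive $2n$-th $m$-th root $\eta = \beta^{m^{-1}\bmod 2n}$. Applying $z^m - \eta^m = \eta^m\prod_{d\mid m}\Phi_d(z/\eta)$ to each linear factor of $\Psi_n(\varepsilon z^m)$ and multiplying gives
\[
\Psi_n(\varepsilon z^m) = \prod_{d\mid m} |\Phi_k(z/\xi_0)|^2
\]
for a specific $\xi_0$ determined by $\varepsilon$ and $m\bmod 2n$. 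For $n \in \{4, 6\}$ this is immediate from the two complex-conjugate roots of $\Psi_n$; for $n = 12$ the four roots of $\Psi_{12}$ split as products $\xi\rho$ with $\xi \in \{\zeta_8, \zeta_8^{-1}\}$ and $\rho \in \{\zeta_3, \zeta_3^{-1}\}$, and the identity $\Phi_d(x/\zeta_3)\Phi_d(x/\overline{\zeta_3}) = \Phi_{3d}(x)$, valid because $(d, 3) = 1$, absorbs the $3$-part of the roots to produce $\Phi_{3d}$ in place of $\Phi_d$. The $d = m$ factor of this product coincides with the factor $|\Phi_k(z/\xi_0)|^2$ of $\Phi_{nm}(q)$ from Step~1, so $|\Phi_k(z/\xi_0)|^2$ divides the gcd.

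\textbf{Conclusion.} Applying Step~2 with $-\varepsilon$ in place of $\varepsilon$ (which replaces $\xi_0$ by $-\xi_0$, since $m^{-1}\bmod 2n$ is odd) shows that the other factor $|\Phi_k(z/(-\xi_0))|^2$ of $\Phi_{nm}(q)$ divides $\Psi_n(-\varepsilon z^m)$; by the coprimality of $\Psi_n(\sqrt{v^{sm}})$ and $\Psi_n(-\sqrt{v^{sm}})$ recorded in the introduction, this factor is coprime to $\Psi_n(\varepsilon z^m)$. Therefore the gcd equals exactly $|\Phi_k(z/\xi_0)|^2 = \Phi_k(z/\xi_0)\Phi_k(z/\overline{\xi_0})$, which is the claimed expression. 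The main conceptual obstacles are the explicit factorization in Step~1 for $n = 12$ (where the nonstandard decomposition $12 = 4 \cdot 3$ must be used to produce $\Phi_{3m}$ rather than $\Phi_m$) and the identity $\Phi_d(x/\zeta_3)\Phi_d(x/\overline{\zeta_3}) = \Phi_{3d}(x)$ driving the $3m$-part of Step~2; the tracking of the specific primitive root $\xi_0$ arising for each $\varepsilon$ is conveniently sidestepped by the ``some suitable $\xi$'' phrasing of the lemma.
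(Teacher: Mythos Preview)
Your proof is correct and follows essentially the same strategy as the paper's: factor $\Phi_{nm}(z^2)$ over the primitive $2n$-th (respectively $8$-th) roots of unity as $f_k(z)f_k(-z)$ with $f_k(x)=\Phi_k(x/\xi)\Phi_k(x/\overline\xi)$, factor $\Psi_n(\varepsilon z^m)$ as $\prod_{d\mid m} f_d(z)$ (respectively $\prod_{d\mid m} f_{3d}(z)$), and finish via the coprimality of $\Psi_n(z^m)$ and $\Psi_n(-z^m)$. The only substantive difference is the integrality step: the paper shows directly that the coefficients of $f_k$ lie in $\mathbb{Z}[\sqrt v]$ by examining $\xi^{j}+\overline\xi^{j}$, whereas you observe that $z/\xi$ lands in the ring of integers of an imaginary quadratic field and take norms---a slightly slicker packaging of the same fact. (Two small slips: in your displayed product $\prod_{d\mid m}|\Phi_k(z/\xi_0)|^2$ the subscript should be $d$ (or $3d$) rather than the fixed $k$; and for $n=6$ the automorphism fixing $\mathbb{Z}[\zeta_3]$ is $\xi\mapsto-\xi$, which negates \emph{both} $\sqrt3$ and $i$, not $\sqrt3$ alone.)
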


\begin{proof}
(a) Suppose that $n=4,6$. Observe that $\varphi(2n)=4$ and that the primitive $2n$th roots of unity
can be written as $\pm\xi$, $\pm\overline \xi$, where $\xi$ is some fixed root.
Since $2n$ and $m$ are coprime, if $\mu$ runs over the set of primitive $m$th roots of unity, then $\pm\xi\mu$, $\pm\overline\xi\mu$ are all of the primitive $2nm$th roots of unity. So we have
$$\Phi_{2nm}(x)=\Phi_m(x/\xi)\Phi_m(x/\overline\xi)\Phi_m(-x/\xi)\Phi_m(-x/\overline\xi).$$
Furthermore, since $n$ is even, it follows that $\Phi_{nm}(x^2)=\Phi_{2nm}(x)$.
Thus  \begin{equation}\label{e:factor}\Phi_{nm}(x^2)=f_m(x)f_m(-x),\end{equation} where $$f_m(x)=\Phi_m(x/\xi)\Phi_m(x/\overline\xi).$$

It is clear that the coefficients of the polynomial $f_m(x)$ are real. Also it is easy to see that the coefficients of the terms $x^{2i}$ are integral linear combinations of numbers of the form  $\xi^{2j}+\overline\xi^{2j}$, while the coefficients of $x^{2i+1}$ are integral linear combinations of numbers of the form $\xi^{2j+1}+\overline\xi^{2j+1}$. Since  $$\xi^{2j}+\overline\xi^{2j}\in\{0,\pm 2\}\text{ and }\xi^{2j+1}+\overline\xi^{2j+1}=\pm\sqrt{2}\text{ if }n=4,$$ $$\xi^{2j}+\overline\xi^{2j}\in\{\pm 1,\pm 2\}\text{ and }\xi^{2j+1}+\overline\xi^{2j+1}\in\{0,\pm \sqrt{3}\}\text{ if }n=6,$$
we deduce that $$f_m(x)=x^{2\varphi(m)}+a_1\sqrt{v}x^{2\varphi(m)-1}+a_2x^{2\varphi(m)-2}+\dots+1,$$ where $a_1,a_2,\dots$ are integers. Thus $f_m(\sqrt{v^s})$ is an integer for every odd $s$.

Note that  $$\underset{d\mid m}{\prod}f_d(x)=(x^m-\xi^m)(x^m-\overline \xi^m)=x^{2m}-(\xi^m+\overline \xi^m)x^m+1=\Psi_n(\epsilon x^m),$$ where $\epsilon\in\{+,-\}$ is defined by the equality  $\xi^m+\overline\xi^m=-\epsilon\sqrt{v}$. Replacing, if necessary, the root $\xi$, we may assume that $\epsilon=\varepsilon$. By the above, every number $f_d(\sqrt{v^s})$, where $d$ divides $m$, is an integer, and so $f_m(\sqrt{v^s})$ divides $\Psi_n(\varepsilon\sqrt{v^{ms}})$. Similarly,
 $f_m(-\sqrt{v^s})$ divides $\Psi_n(-\varepsilon\sqrt{v^{ms}})$. The numbers $\Psi_n(\sqrt{v^{ms}})$ and $\Psi_n(-\sqrt{v^{ms}})$ are coprime, and hence $(\Phi_{nm}(v^s), \Psi_n(\varepsilon\sqrt{v^{ms}}))=|f_m(\sqrt{v^s})|$, as required.

 (b) Applying \eqref{e:factor} with $n=4$, we derive that $\Phi_{12m}(x^2)=f_{3m}(x)f_{3m}(-x)$.
Also since $3$ does not divide $m$, we have  $$\prod_{d\mid m}\Phi_{3d}(x)=\prod_{d\mid 3m}\Phi_d(x)/\prod_{d\mid m}\Phi_d(x)=\frac{x^{3m}-1}{x^m-1}=\Phi_3(x^m).$$
It follows that
 $$\prod_{d\mid m}f_{3d}(x)=\Phi_3((x/\xi)^m)\Phi_3((x/\overline \xi)^m))=(x^{2m}-\xi^mx^m+\xi^{2m})(x^{2m}-\overline\xi^mx^m+\overline\xi^{2m})=$$
 $$=x^{4m}+\epsilon x^{3m}+x^{2m}+\epsilon x^m+1=\Psi_{12}(\epsilon x^m),$$
 where $\epsilon$ is defined in the same way as in (a). The rest of the proof is similar to the proof of (a).
\end{proof}

\begin{lemma}\label{l:bounds}
Let $n=4,6$, $(m,n)=1$ and $f_m(x)= \Phi_m(x/\xi)\Phi_m(x/\overline\xi)$, where $\xi$ is a~primitive $2n$th root of unity. If either $x\geqslant \sqrt{2}$ and $m>5$, or
$x\geqslant \sqrt{3}$ and $m>3$, then $|f_m(x)|>r$, where $r$ is the largest prime divisor of $m$. \end{lemma}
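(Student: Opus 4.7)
The plan is to reduce the inequality $|f_m(x)|>r$ to a short computation at two specific values of $x$.

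First, since $\Phi_m$ has real coefficients and $\bar\xi$ is the complex conjugate of $\xi$, for real $x$ one has
$$f_m(x)=\Phi_m(x/\xi)\,\Phi_m(x/\bar\xi)=\Phi_m(x/\xi)\,\overline{\Phi_m(x/\xi)}=|\Phi_m(x/\xi)|^2,$$
a nonnegative real number. Writing $y=x/\xi$ (so that $|y|=x$), the goal becomes $|\Phi_m(y)|^2>r$.

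Second, I would establish a monotonicity. The expansion
$$|\Phi_m(y)|^2=\prod_\zeta|y-\zeta|^2=\prod_\zeta\bigl(x^2-2x\,\mathrm{Re}(\xi\zeta)+1\bigr),$$
with $\zeta$ ranging over the primitive $m$th roots of unity, shows that each factor is nonnegative and has $x$-derivative $2x-2\,\mathrm{Re}(\xi\zeta)\geqslant 2(x-1)\geqslant 0$ on $[1,\infty)$; hence the product is nondecreasing in $x$ for $x\geqslant 1$. It therefore suffices to verify the inequality at the endpoints $x=\sqrt{2}$ (first case) and $x=\sqrt{3}$ (second case).

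At these fixed values I would invoke the M\"obius identity $\Phi_m(y)=\prod_{d\mid m}(y^d-1)^{\mu(m/d)}$ together with the elementary bounds $x^d-1\leqslant|y^d-1|\leqslant x^d+1$, using the lower estimate when $\mu(m/d)=+1$ and the upper one when $\mu(m/d)=-1$, obtaining
$$|\Phi_m(y)|\geqslant\prod_{d:\,\mu(m/d)=+1}(x^d-1)\Big/\prod_{d:\,\mu(m/d)=-1}(x^d+1).$$
Squaring, I would finish by a short case analysis on the shape of $m$. For $m=p$ prime the bound collapses to $(x^p-1)^2/(x+1)^2$, and the inequality $(x^p-1)^2>p(x+1)^2$ is verified for $p\geqslant 7$ when $x=\sqrt{2}$ and for $p\geqslant 5$ when $x=\sqrt{3}$ using the exponential growth of $x^p$. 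For $m=p^a$ with $a\geqslant 2$ the bound becomes $(x^m-1)^2/(x^{m/p}+1)^2$, of order $x^{2\varphi(m)}$, which easily beats $r=p$. When $m$ has at least two distinct prime factors the M\"obius product is again of order $x^{2\varphi(m)}$ and dominates $r\leqslant m$ comfortably, since $\varphi(m)\geqslant(p-1)(q-1)\geqslant 4$ for the two smallest prime divisors.

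The main obstacle will be the prime case near the threshold — $m=7$ with $x=\sqrt{2}$, and $m=5$ with $x=\sqrt{3}$ — where the M\"obius bound is tightest; both values still clear the hurdle by a direct evaluation, but the narrow margin is precisely what makes the hypotheses $m>5$ (resp.\ $m>3$) sharp, as the excluded cases $m=3,5$ for $x=\sqrt{2}$ actually do fail the inequality for appropriate choices of $\xi$.
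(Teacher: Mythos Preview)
Your approach is correct and genuinely different from the paper's. The paper never reduces to an endpoint value of $x$; instead it writes $m=r^{i}l$ with $(r,l)=1$, uses the identity $\Phi_m(x)=\Phi_l(x^{r^{i}})/\Phi_l(x^{r^{i-1}})$, and bounds each quadratic factor of $f_m$ crudely to obtain
\[
|f_m(x)|\geqslant\left(\frac{(y^{r}-1)^2}{(y+1)^2}\right)^{\varphi(l)},\qquad y=x^{r^{i-1}},
\]
after which a short case split on $r\in\{3,5\}$ versus $r\geqslant 7$ finishes. Your monotonicity observation (that each factor $x^{2}-2x\,\mathrm{Re}(\xi\zeta)+1$ is nondecreasing for $x\geqslant 1$) is a clean idea the paper does not exploit; it lets you work at a single value of $x$. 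In the prime case $m=p$ both arguments produce the same bound $(x^{p}-1)^{2}/(x+1)^{2}$, so the threshold analysis is identical. What the paper's decomposition buys is uniformity: one inequality covers all composite $m$ at once, with no dependence on the number of prime factors.

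One point to tighten: in your last case (at least two distinct prime factors) the justification ``of order $x^{2\varphi(m)}$ and dominates $r\leqslant m$ comfortably, since $\varphi(m)\geqslant(p-1)(q-1)\geqslant 4$'' is not quite what is needed. The constant hidden in ``of order'' depends on the divisor structure of $m$, and the bound $\varphi(m)\geqslant 4$ says nothing about $r$, which can be arbitrarily large. What you actually need is that for odd $m$ with a second prime factor one has $\varphi(m)\geqslant 2(r-1)$ (since $m/r\geqslant 3$), so that at $x=\sqrt{2}$ the main term $2^{\varphi(m)}\geqslant 4^{\,r-1}$ overwhelms both $r$ and the bounded correction coming from the factors $(1\pm x^{-d})$ in the M\"obius product. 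With that adjustment your argument goes through; alternatively, you could borrow the paper's $\Phi_l(x^{r^{i}})/\Phi_l(x^{r^{i-1}})$ reduction for the composite step and keep your monotonicity idea for the rest.
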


\begin{proof}
Let $m=r^il$, where $(r,l)=1$. We have $\Phi_m(x)=\Phi_l(x^{r^{i}})/\Phi_l(x^{r^{i-1}})$ and so $$|f_m(x)|=\frac{|\Phi_l(({x}/\xi)^{r^{i}})\Phi_l(({x}/\overline\xi)^{r^{i}})|}{|\Phi_l(({x}/\xi)^{r^{i-1}})\Phi_l(({x}/\overline\xi)^{r^{i-1}})|}.$$
The nominator of the last expression is a product of $\varphi(l)$ factors of the form $|y^{2r}-(\lambda+\overline\lambda)y^r+1|$, where $y=x^{r^{i-1}}$ and $|\lambda|=1$. Since  $y\geqslant \sqrt{2}$, each of these factors is not less than $y^{2r}-2y^r+1=(y^r-1)^2$. Similarly, every factor in the denominator is not larger than $(y+1)^{2}$. Hence
$$|f_m(x)|\geqslant \left(\frac{(y^r-1)^2}{(y+1)^2}\right)^{\varphi(l)}.$$
It ie easy to check that
$$\frac{y^r-1}{y+1}>\frac{y^{r-2}(y^2-1)}{y+1}=y^{r-2}(y-1)\geqslant y^{r-2} \text{ if } y\geqslant 2,$$
$$\frac{y^r-1}{y+1}\geqslant \frac{y^{r-3}(y^3-1)}{y+1}>y^{r-3} \text{ if } y\geqslant \sqrt{3};$$
$$\frac{y^r-1}{y+1}\geqslant \frac{y^{r-5}(y^5-1)}{y+1}>y^{r-5}y^{3/2}=y^{r-\frac{7}{2}} \text{ if } r\geqslant 5.$$

Thus for all $r\geqslant 7$, we have $|f_m({x})|>y^{2(r-\frac{7}{2})}>y^{2(r-4)}\geqslant 2^{r-4}>r.$

If $r=3$, then $i>1$, and so $y\geqslant \sqrt{2^3}>2$, and $|f_m(x)|>y^2\geqslant 2^3>3.$

Let $r=5$. If $x\geqslant \sqrt{3}$ or $i>1$, then $y\geqslant \sqrt{3}$ and hence $|f_m({x})|>y^4\geqslant3^{2}>5.$  If $x<\sqrt{3}$ and $i=1$, then $l\geqslant 3$, and therefore $|f_m(x)|>(y^{3})^{\varphi(l)}\geqslant \sqrt{2}^6>5.$
 \end{proof}

We now show that in all cases, except for the case $n=4$ and $m<5$,  one of \eqref{e:in} and \eqref{e:r} holds.

If $n=4$ and $m>5$, then by Lemmas \ref{l:gcd} and \ref{l:bounds}, we have 
$$(\Phi_{nm}(v),\Psi_n(\varepsilon\sqrt{v^m}))=|f_m(\sqrt{v})|>r.$$

Let $n=6$ and $m=sd$, where $s=3^i$ and $(3,d)=1$. If $d=1$, then $$k_{6m}(v)=\Phi_{6m}(v)=\Phi_6(v^m),$$ where the first equality follows from \eqref{e:km} and the remark after it.
Hence $$(k_{6m}(v),\Psi_6(\varepsilon\sqrt{v^m}))=\Psi_6(\varepsilon\sqrt{v^m})>1.$$
If $d>1$, then $d\geqslant 5$ and $r$ is the largest prime divisor of $d$. Thus, applying Lemmas \ref{l:gcd} and \ref{l:bounds}, we see that $$(\Phi_{6m}(v),\Psi_6(\varepsilon\sqrt{v^m}))=(\Phi_{6d}(v^s), \Psi_6(\varepsilon\sqrt{v^{sd}}))=|f_d(\sqrt{v^s})|>r.$$

The case $n=12$ is similar to the case $n=6$.
This completes the proof of Theorem~\ref{t:primitive}.

\begin{remark}
Let $k\geqslant 2$ be a square-free positive integer and suppose that $v$ is a divisor of $k$ such that $k/v$ is odd. Let $\tau=-$ if $v\equiv 1\pmod 4$ and $\tau=+$ otherwise.  By  \cite[Theorem 1]{62Sch}, there are polynomials $P_{k,v}(x)$, $Q_{k,v}(x)\in \mathbb{Z}[x]$ such that 
$\Phi_{2k}(\tau x)=P_{k,v}^2(x)-vxQ^2_{k,v}(x).$
Hence for every odd positive integer $m$, we have the following factorization of $\Phi_{2k}(\tau v^m)$ as a product of two integers:
\begin{equation}\label{e:auri}
\Phi_{2k}(\tau v^m)=(P_{k,v}(v^m)+v^{(m+1)/2} Q_{k,v}(v^m))(P_{k,v}(v^m)-v^{(m+1)/2} Q_{k,v}(v^m)).\end{equation}
The factorizations of $\Phi_4(2^m)$, $\Phi_6(3^m)$ and $\Phi_{12}(2^m)$ related to the Suzuki--Ree groups are in fact the factorization  \eqref{e:auri} with  $k=2,3, 6$ and $v=2,3,2$ respectively.
Since $\pi(\Phi_{2k}(\tau v^m))$ contains all primitive divisors of $v^{2mk}-1$ (if $\tau=+$) or of $v^{mk}-1$ (if $\tau=-$), one can ask under what conditions on $k$, $v$ and $m$, each of the factors in \eqref{e:auri} is divisible by the corresponding primitive divisor, and thus obtain some generalization of Theorem \ref{t:primitive}. However, the study of this question is beyond the scope of the present work. Also it is worth mentioning that \cite[Theorem 2]{62Sch} provides some sufficient conditions for 
 $v^{2mk}-1$ (or $v^{mk}-1$) to have at least two primitive divisors.
\end{remark}

\begin{remark} Primitive prime divisors of the orders of Suzuki--Ree groups are also 
studied in \cite[Section 3]{22Katz}, and Theorem \ref{t:primitive} of the present paper is Theorems 3.4, 3.7 and Corollary 3.5 in \cite{22Katz}. Observe that the author 
discovered the work \cite{22Katz} after submitting the present paper to ``Algebra and Logic'' and that the proofs 
of these two works, despite some common motifs,  are quite different.
\end{remark}

\section*{\bf \S\,2. Prime graphs}

Let $L$ be one of the groups $^2B_2(2^m)$, $^2G_2(3^m)$, or $^2F_4(2^m)$, where $m>1$ is odd, and let $v=2,3,2$ respectively. It was established in  \cite{62Suz,  61ReeF, 61ReeG} that  the group $\Aut L$ is a semidirect product of $L$ and $\langle \alpha\rangle$, where $\alpha$ is a field automorphism of $L$ of order $m$. Furthermore, every outer automorphism of $L$ of prime order is conjugate to a generator of $\langle \alpha^k\rangle$ for some $k$ dividing $m$ and $C_L(\alpha^k)$ is a Suzuki--Ree group of the same type as $L$ but over the field of order $v^{k}$ (see, for example, \cite[Proposition 4.9.1]{98GorLySol}).

Note the following consequences of these facts valid for every $G$ such that $L\leq G\leq \Aut L$.  Firstly, 
$\Aut L/L$ is cyclic, and so $t(G)\leqslant t(L)+1$.
Secondly, $\alpha$ centralizes an element of order $2$, and hence $t(2,G)\leqslant t(2,L)$. Thirdly, 
$|G/L|$ cannot be divisible by a primitive prime divisor $r$ of $v^{mi}-1$ for any $i\geqslant 1$. Indeed,  $mi$ is equal to the multiplicative order of $v$ modulo  $r$ and 
thus $mi$ divides $r-1$, which implies that $r>|\Aut L/L|$.

We prove Theorem \ref{t:independence} considering each type of the simple groups in turn. Information about the numbers $t(L)$, $t(2,L)$ and independent sets whose sizes are equal to these numbers are taken from \cite[Table 4]{11VasVd.t} and \cite[Propositions  6.5 and 6.8]{05VasVd.t} respectively.

Suppose that $L={}^2B_2(2^m)$. Then $t(L)=t(2,L)=4$. Also the independent sets of vertices of size four in $GK(L)$ are exactly the sets of the form $\{2,r_1,r_4^+, r_4^-\}$,
where $r_1\in\pi(2^m-1)$ and $r_4^\varepsilon\in\pi(\Psi_4(\varepsilon \sqrt{2^m}))$. Assume that $\rho$ is an independent set of size five in $GK(G)$. Then $\rho\cap\pi(G/L)=\{r\}$ for  some $r$ and $|\rho\cap\pi(L)|=4$. The last equality implies that $2\in \rho$ and hence $t(2,G)\geqslant 5$, a contradiction.
Thus $t(2,G)\leqslant t(G)\leqslant 4$.

Let $m>5$. By Theorems \ref{t:bz} and \ref{t:primitive}, we can take a primitive prime divisor of $2^m-1$ as $r_1$ 
and primitive prime divisors of $2^{4m}-1$ dividing $\Psi_4(\sqrt{2^m})$ and $\Psi_4(-\sqrt{2^m})$ as 
$r_4^+$ and $r_4^-$ respectively. Then $\{2,r_1,r_4^+, r_4^-\}\cap\pi(G/L)=\varnothing$ and so the set
$\{2,r_1,r_4^+, r_4^-\}$ remains independent in $GK(G)$. Thus $t(2,G)=t(G)=4$, as required. 

If $m=3, 5$, then $GK(L)$ consists of four isolated vertices $2$, $5$, $2^m-1$ and $\Psi_4^+(\sqrt{2^m})$.
If $m=3$, then all of these numbers are coprime to $|G/L|$ and, reasoning as above, we have $t(2,G)=t(G)=4$. If  $m=5$, then $\pi(G)=\pi(L)$ and  $GK(G)$ differs from  $GK(L)$ in having an edge between $5$ and $2$. Thus $t(2,G)=t(G)=3$.

Let $L={}^2G_2(3^m)$, where $m\geq 3$. Then $t(L)=5$ and $t(2,L)=3$, and the independent sets of size five and independent sets of size three containing 2 are exactly the sets of the form   $\{3,r_1, r_2, r_6^+, r_6^-\}$ and
$\{2, r_6^+, r_6^-\}$ respectively, where $2\neq r_1\in\pi(3^m-1)$, $r_2\in\pi(3^m+1)$ and $r_6^\varepsilon\in\pi(\Psi_6(\varepsilon\sqrt{3^m})$.
By Theorem \ref{t:bz}, we can take $r_1$ and $r_2$ to be primitive divisors of $3^m-1$ and $3^{2m}-1$. By Theorem \ref{t:primitive}, we can choose $r_{6}^+$ and $r_{6}^-$ in such a way that they both are primitive prime divisors of  $3^{6m}-1$. All these chosen numbers do not divide $|G/L|$, and hence $t(G)\geqslant 4$ and $t(2,G)\geqslant 3$. It follows that $t(2,G)=3$. Also every prime  divisor of $|G/L|$ is adjacent to $3$ in $GK(G)$ and, therefore, $t(G)\leqslant 5$.

Suppose that $\pi(G)=\pi(L)$. If $3$ does not divide $|G:L|$, then $\{3,r_1, r_2, r_6^+, r_6^-\}$ is still a coclique in $GK(G)$ and $t(G)=5$. Let $3$ divide $|G:L|$ and assume that $\rho$ is a coclique of size $5$ in $GK(G)$. Since $3\cdot\omega(^2G_2(3^{m/3}))\subseteq \omega(G)$, it follows that $3r\in\omega(G)$ for every $r\in\pi(3^m+1)$. Hence $\rho$ cannot contain $3$ and a divisor of $3^m+1$ at the same time, and so $\rho$ is not a coclique in $GK(L)$. This contradiction shows that $t(G)=4$. 

Suppose that $\pi(G)\neq \pi(L)$ and $r\in\pi(G)\setminus\pi(L)$. It follows that $r>3$. Assume that $r$ is adjacent to some of the chosen primitive divisors, the divisor $r_i$ of $3^{mi}-1$ say. Then $r_i\in\pi({}^2G_2(3^{m/r}))$ and so  $r_i$ divides $(3^{mi}-1, 3^{6m/r}-1)=3^{(mi, 6m/r)}-1$. Since $r>3$, we have  $(mi,6m/r)=(ri, 6)m/r=(i,6)m/r<im$, contrary to the definition of primitive divisor. Thus $\{r, r_1, r_2, r_{6}^+, r_{6}^-\}$ is an independent set in $GK(G)$ and $t(G)=5$. 

Suppose that $L={}^2F_4(2^m)$, where $m>3$. Then $t(L)=5$ and $t(2,L)=4$. The independent sets of size five and independent sets of size four containing 2 are exactly the sets of the form $\{r_2, r_4, r_6, r_{12}^+, r_{12}^-\}$ and $\{2, r_6, r_{12}^+, r_{12}^-\}$ respectively, where $3\neq r_2\in\pi(2^m+1)$, $r_4\in\pi(2^{2m}+1)$, $3\neq r_6\in\pi(2^{2m}-2^m+1)$ and $r_{12}^\varepsilon\in\pi(\Psi_{12}(\varepsilon \sqrt{2^m}))$.
We choose $r_i$, with $i=2,4,6$, to be a primitive prime divisor of $2^{mi}-1$ (it exists because $m>3$) and $r_{12}^+$ and $r_{12}^-$ to be primitive prime divisors of  $2^{12m}-1$.
All these chosen numbers do not divide $|G/L|$, and hence $t(G)\geqslant 5$ and $t(2,G)\geqslant 4$. It follows that $t(2,G)=4$. If $\pi(G)\subseteq \pi(L)$, then $t(G)\leqslant t(L)$, and so $t(G)=t(L)$.

Suppose that  $r\in\pi(G)\setminus\pi(L)$. In particular, $r>3$. Assume that $r$ is adjacent to some chosen primitive divisor $r_i$ of $2^{mi}-1$.
Then $r_i\in\pi({}^2F_4(2^{m/r}))$ and so  $r_i$ divides $(2^{mi}-1, 2^{12m/r}-1)=2^{(mi,12m/r)}-1$. Since $r>3$, we have  $(mi,12m/r)=(ri, 12)m/r=im/r<im$, contrary to the definition of primitive divisor. Thus $\{r, r_2, r_4, r_6, r_{12}^+, r_{12}^-\}$ is an independent set in $GK(G)$ and $t(G)=6$.

Let $L={}^2F_4(8)$. In this case $t(L)=t(2,L)=4$ and $\{2, 19, 37, 109\}$ is an independent set. This set is disjoint from $\pi(G/L)$. Also $\pi(G)\subseteq\pi(L)$, and therefore $t(G)=t(L)$ and $t(2,G)=t(2,L)$. The proof of Theorem \ref{t:independence} is complete. 

\begin{remark}
Theorem 3 of \cite{23Gre.t} was used in the proof of Theorem 1 of \cite{24GrePan.t} to show that the spectrum of an almost simple group $G$ with socle $^2G_2(3^m)$ cannot be equal to the spectrum of a classical simple group $H$ \cite[p. 1082]{24GrePan.t}. A corrected argument is as follows. If $\omega(G)=\omega(H)$, then $4\not\in\omega(H)$ and so $H=L_2(u)$ for some $u$ by Walter's classification of simple finite groups with abelian Sylow 2-subgroups \cite{69Wal}. Since $t(L_2(u))=3$ and $t(G)\geqslant 4$ by the corrected version of Theorem 3, we derive a contradiction.
\end{remark}

The author is grateful to A.A. Buturlakin and A.V. Vasil'ev for discussion of the problem and comments to the paper.

\makeatletter
\renewcommand*{\@biblabel}[1]{\hfill#1.}
\makeatother

\medskip

Grechkoseeva Maria

Sobolev Institute of Mathematics

\verb"grechkoseeva@gmail.com"

\end{document}